\newcommand\smvee{\raise0.3ex\hbox{$\scriptscriptstyle\vee$}}
\newtheorem{theorem}{Theorem}[section]
\newtheorem{nmtheorem}{Theorem}
\newtheorem{mmtheorem}{Theorem}
\newtheorem{remark}[nmtheorem]{Remark}
\newtheorem{cor}[mmtheorem]{Corollary}
\numberwithin{equation}{section}
\newcounter{para}
\def\C{\mathbb{C}}
\def\Cc{\mathbb{C}^{\times}}
\def\Qb{\mathbb{Q}}
\def\P{\mathbb{P}}
\def\Pc{\mathbb{P}_{\mathbb{C}}}
\def\F{\mathbb{F}}
\def\S{\mathit{Sym}}
\def\pconf{\mathit{PConf}}
\def\uconf{\mathit{UConf}}
\newcommand\Sym{\mathit{Sym}}
\date{}
\def\question{\medbreak
	\global \advance \questionno 1
	\textbf{Problem \the\questionno}.\enspace \ignorespaces}
\newcommand\shorttitle{On the cohomology of certain subspaces of $\Sym^n(\P^1)$ and Occam's razor for Hodge structures}
\newcommand\authors{Oishee Banerjee}
\title{On the cohomology of certain subspaces of $\Sym^n(\P^1)$ and Occam's razor for Hodge structures}
\author{Oishee Banerjee}
\begin{document}
	\maketitle
	\begin{abstract}
	 In \cite{Vakil13} Vakil and Wood made several conjectures on the topology of symmetric powers of geometrically irreducible varieties based on their computations on motivic zeta functions. Two of those conjectures are about subspaces of $\Sym^n(\P^1)$. In this note, we disprove one of them thereby obtaining a counterexample to the principle of Occcam's razor for Hodge structures; and we prove that the other conjecture, with a minor correction, holds true.
	\end{abstract}
\section{Introduction}
For a smooth and proper variety $X$ over $\C$, the Hodge-Deligne polynomial determines the Hodge numbers; but that is no longer the case when $X$ is not smooth and proper. To elaborate, for any variety $X$ over $\C$, the compactly supported cohomology groups $H^i_c(X,\Qb)$ carry Deligne's mixed hodge structures. One defines the \emph{Hodge-Deligne polynomial} as $$HD(x,y):= \sum\limits_{p,q}e_{p,q}x^py^q.$$
Here $e_{p,q}$ are \emph{virtual Hodge-Deligne numbers}, defined in terms of pure Hodge structures that the associated gradeds for the weight filtration on $H^*_c(X,\Qb)$ are equipped with:
$$e_{p,q} = \sum\limits_{i} (-1)^ih^{p,q}\Big(\mathit{gr}_{W}^{p+q} H^i_c(X,\Qb)\Big).$$ When $X$ is smooth and proper, one has $e_{p,q} = (-1)^i h_{p,q}(H^i(X,\Qb)).$ There are many examples where the simplest possibility holds i.e. there is a simplest Hodge structure on $H^i_{c}(X,\Qb)$ for all $i$ in agreement with the virtual Hodge structure. In \cite{Vakil13}, Vakil and Wood dub this well-known principle as "Occam’s razor for Hodge structures".
This principle led them to conjecture about the stable rational cohomology of certain subspaces of $\Sym^m(\P^1)$, the $m$-fold symmetric product of $\Pc^1$. These are Conjectures G' and H' in \cite{Vakil13} \footnote{Note that the conjectures are in the arxiv version of the paper, and not the published version \cite{Vakil15}}. The goal of this note is to disprove one of them, and prove a slight alteration of the other.

Before stating their conjectures and the main theorem of this note, we fix some notations. All varieties are over $\mathbb{C}$. For a complex quasiprojective variety $X$, let $\Sym^m(X)$ denote the $n$-fold symmetric product. For a partition $\lambda$ of $m$ and a complex variety $X$, let $w_{\lambda}(X)$ denote the locally closed subset of $\Sym^m (X)$ with multiplicities precisely $\lambda$. Further more, let $\uconf_n X$ denote the unordered configuration space of $n$ points on $X$ i.e. $\uconf_n X= w_{1^n}(X)$, and let $\pconf_n X$ denote the ordered configuration space of $n$ points on $X$. 

	Conjecture H' of \cite{Vakil13} states that the values of $i>0$ for which $$\lim\limits_{n\to \infty} \dim H^i(w_{1^n22}(\P^1);\Qb) \neq 0$$ in \eqref{eq1.1} is periodic in $i$, and the nonzero limits equal $1$. Conjecture G' of \cite{Vakil13} states that $$	\lim\limits_{n\to \infty} \dim H^i(w_{1^n23}(\P^1);\Qb) =\begin{dcases}1 & i=0,1,\\0 & \text{ otherwise.}\end{dcases}$$

Our goal is to prove the following theorem.
\begin{mmtheorem}\label{thmA}
	Let $n\geq 2$. Then \begin{gather*}
	H^i(w_{1^n22}(\P^1);\Qb) = \begin{dcases}
	\Qb & \text{for } i=0, n \\
	\Qb^{2} & \text{for } 1\leq i\leq n-1,\\
	0 & \text{otherwise.} 
	\end{dcases}
	\end{gather*}
	In particular, for all $i\geq 1$, \begin{gather}
	\lim\limits_{n\to \infty} \dim H^i(w_{1^n22}(\P^1);\Qb) = 2.\label{eq1.1}
	\end{gather} Furthermore, $H^i(w_{1^n22}(\P^1);\Qb)$  is pure of weight $-2i$ and Hodge type $(-i,-i)$ for all $0\leq i\leq n$. \hfill$\square$
\end{mmtheorem}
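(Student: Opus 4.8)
The plan is to exploit the map that remembers only the two double points. Sending a configuration to its pair of double points realizes $w_{1^n22}(\P^1)$ as a fiber bundle
$$\uconf_n(\Cc)\longrightarrow w_{1^n22}(\P^1)\longrightarrow \uconf_2(\P^1),$$
whose fiber over $\{p_1,p_2\}$ is the space of $n$ unordered points in $\P^1\setminus\{p_1,p_2\}\cong\Cc$. First I would record the base: since $\pconf_2(\P^1)\simeq S^2$ is simply connected and the point–swap acts by $-1$ on $H^2$, the base $\uconf_2(\P^1)$ is rationally a copy of $\mathbb{RP}^2$, so $H^*(\uconf_2(\P^1);\Qb)=\Qb$ is concentrated in degree $0$ while the sign local system $\mathcal{L}_-$ has $H^*(\uconf_2(\P^1);\mathcal{L}_-)=\Qb$ concentrated in degree $2$. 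Hence the Leray spectral sequence of the bundle has only the two columns $p=0$ and $p=2$, which is what makes the computation tractable.

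Next I would compute $H^*(\uconf_n(\Cc);\Qb)$ on its own. Writing $\uconf_n(\C)$ as the space of monic squarefree degree-$n$ polynomials and deleting the smooth divisor $\{p(0)=0\}\cong\uconf_{n-1}(\Cc)$, the Gysin sequence together with $H^*(\uconf_n(\C);\Qb)=\Qb[0]\oplus\Qb[1]$ gives the recursion $P_n(t)=1+t+tP_{n-1}(t)$ for the Poincaré polynomials, hence
$$P_n(t)=\frac{(1+t)(1-t^n)}{1-t}=1+2t+2t^2+\cdots+2t^{n-1}+t^n.$$
The classes are cut out by the logarithmic forms $d\log\prod_i z_i$ and $d\log\Delta$ and their higher products, so each $H^k(\uconf_n(\Cc);\Qb)$ is pure of Tate type with weight $2k$; this is the arithmetic input I will later dualize to reach the stated Hodge types.

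The heart of the argument is the monodromy. Here $\pi_1(\uconf_2(\P^1))=\mathbb{Z}/2$ is generated by the loop interchanging the two double points, and on the fiber this is induced by the involution $\iota\colon z\mapsto 1/z$ of $\Cc$ (any accompanying Dehn twist of the annulus is isotopic to the identity and acts trivially). I would therefore split each $H^k(\uconf_n(\Cc);\Qb)=V_k^+\oplus V_k^-$ into $\iota^*$-eigenspaces: the invariants feed the $p=0$ column and the anti-invariants, twisted by $\mathcal{L}_-$, feed the $p=2$ column, so the surviving terms in degree $k$ are $\ker\big(d_2\colon V_k^+\to V_{k-1}^-\big)$ and $\operatorname{coker}\big(d_2\colon V_{k-1}^+\to V_{k-2}^-\big)$. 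The main obstacle, and the step I expect to demand real work, is the explicit determination of $\dim V_k^{\pm}$ in every degree together with the vanishing (or exact evaluation) of $d_2$; forcing the outcome to be the Betti sequence $1,2,\dots,2,1$ is precisely where the monodromy behaviour has to be pinned down, and it is exactly the place where the naive "pure/Occam" expectation and the genuine answer must be reconciled.

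Finally I would promote the Betti calculation to mixed Hodge structures. Because the fiber cohomology is pure Tate and the only base contribution is the single $\mathcal{L}_-$-class living on the rational $\mathbb{RP}^2$, each surviving group is an extension-free Tate piece; tracking the twist contributed by the $p=2$ column and by Poincaré duality on the smooth $(n+2)$-dimensional variety $w_{1^n22}(\P^1)$ then yields the asserted weight and Hodge type in each degree $0\le i\le n$. As a consistency check I would compare the resulting $E$-polynomial with the direct count $|w_{1^n22}(\P^1)(\mathbb{F}_q)|$, obtained by summing $|\uconf_2(\P^1\setminus\{p_1,p_2\})(\mathbb{F}_q)|$ over $\uconf_2(\P^1)(\mathbb{F}_q)$; this simultaneously fixes the weights and isolates the discrepancy responsible for the failure of Occam's razor.
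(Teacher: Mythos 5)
Your proposal stops exactly where a proof would have to happen. Everything is staked on what you yourself call ``the main obstacle'': the explicit determination of $\dim V_k^{\pm}$ and of the differentials $d_2$. You never carry it out, so as written there is no proof --- the fibration, the rational cohomology of the base $\uconf_2(\P^1)$ (trivial coefficients in degree $0$, sign coefficients $\mathcal{L}_-$ in degree $2$), and the Betti numbers $1,2,\dots,2,1$ of $\uconf_n(\Cc)$ (your Gysin recursion for these is fine) are all routine by comparison. Note also that this is \emph{not} the paper's route: the paper never decomposes into $V^{\pm}$ at all. It asserts that the monodromy of $\pi_1(\uconf_2(\P^1))\cong\mathbb{Z}/2$ on $H^*(\uconf_n(\Cc);\Qb)$ is trivial, on the grounds that the swap ``acts by conjugation'' on $\pi_1(\uconf_n(\Cc))$; granting that, the local system is constant, your $p=2$ column disappears, and the Serre spectral sequence over the rationally acyclic base collapses to $H^*(w_{1^n22}(\P^1);\Qb)\cong H^*(\uconf_n(\Cc);\Qb)$.

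The deeper problem is that your postponed step cannot be completed so as to ``force the outcome to be $1,2,\dots,2,1$'', because the monodromy is genuinely nontrivial --- and this breaks the paper's argument, not just yours. You identified the monodromy correctly as $\iota\colon z\mapsto 1/z$. On $H^1(\uconf_n(\Cc);\Qb)=\Qb\alpha\oplus\Qb\beta$, with $\alpha=[\sum_i d\log z_i]$ and $\beta=[\sum_{i<j}d\log(z_i-z_j)]$, one computes $\iota^*\alpha=-\alpha$ and $\iota^*\beta=\beta-(n-1)\alpha$, so $\dim V_1^{+}=\dim V_1^{-}=1$; since inner automorphisms act trivially on $H_1$, the swap induces an \emph{outer} automorphism of $\pi_1(\uconf_n(\Cc))$ and the paper's conjugation claim is false. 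Feeding this into your own formula: $E_2^{1,0}=H^1(\mathbb{RP}^2;\Qb)=0$ and $E_2^{2,0}=0$ (because $H^2(\mathbb{RP}^2;\Qb)=0$ and $V_0^{-}=0$), hence $H^1(w_{1^n22}(\P^1);\Qb)=V_1^{+}\cong\Qb$ for every $n\geq 2$, not $\Qb^2$. So completing your program \emph{disproves} Theorem A rather than proving it. You can verify this independently for $n=2$ with no spectral sequence: $w_{1^222}(\P^1)=\pconf_4(\P^1)/\langle(12),(34)\rangle$, and $\pconf_4(\P^1)\cong PGL_2(\C)\times M_{0,4}$ with both transpositions acting on the cross-ratio by $\lambda\mapsto 1/\lambda$; the invariants of this involution on $H^1(\P^1\setminus\{0,1,\infty\};\Qb)$ are one-dimensional (residue vectors $(r_0,r_1,r_\infty)$ with $r_0+r_1+r_\infty=0$ and $r_0=r_\infty$). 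Carrying the $n=2$ computation to the end gives Betti numbers $(1,1,0,1,1)$ in degrees $0,\dots,4$, with $H^0,H^1,H^3,H^4$ pure of weights $0,2,4,6$; this matches the count $|w_{1^222}(\F_q)|=q^4-q^3-q^2+q$, whereas the theorem's $(1,2,1)$ together with its purity assertion would force $q^4-2q^3+q^2$. Your proposed final consistency check against point counts is therefore exactly the right instinct: it detects that the statement itself, and the monodromy step in the paper's proof, are in error.
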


\noindent The following corollary to Theorem \ref{thmA} disproves Conjecture G'.

\begin{cor}\label{cor}
	Let $n\geq 2$. Then for all $0\leq i\leq n$, \begin{gather}
	\lim\limits_{n\to \infty} \dim H^i(w_{1^n23}(\P^1);\Qb) \neq 0.\label{eq1.2} 
	\end{gather} \hfill$\square$
\end{cor}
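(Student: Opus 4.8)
The plan is to exhibit $w_{1^n23}(\P^1)$ as a free double cover of $w_{1^n22}(\P^1)$ and then push the conclusion of Theorem~\ref{thmA} through the associated transfer isomorphism. A point of $w_{1^n23}(\P^1)$ is a divisor $3r+2q+\sum_{j=1}^{n}p_j$ on $\P^1$ whose supporting points $r,q,p_1,\dots,p_n$ are pairwise distinct and in which the $p_j$ are unordered; discarding the multiplicities but recording which point carried multiplicity $3$ and which carried multiplicity $2$ identifies this space with the set of triples $(r,q,\{p_1,\dots,p_n\})$ of pairwise distinct data. A point of $w_{1^n22}(\P^1)$ is a divisor $2q_1+2q_2+\sum_{j=1}^{n}p_j$ of the same shape, except that the two multiplicity-$2$ points $\{q_1,q_2\}$ are now genuinely unordered. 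Consequently the map that forgets the ordering of $(r,q)$ realizes $w_{1^n23}(\P^1)$ as a double cover of $w_{1^n22}(\P^1)$, free because $r\neq q$, with deck group $\mathbb{Z}/2$ acting by the swap $r\leftrightarrow q$.

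Since $\mathbb{Z}/2$ is finite and we work with $\Qb$-coefficients, the transfer gives $H^*(w_{1^n22}(\P^1);\Qb)\cong H^*(w_{1^n23}(\P^1);\Qb)^{\mathbb{Z}/2}$, the subspace of invariants for the deck action. As this invariant subspace injects into the full cohomology, for every $i$ we obtain the inequality $\dim H^i(w_{1^n23}(\P^1);\Qb)\geq \dim H^i(w_{1^n22}(\P^1);\Qb)$.

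By Theorem~\ref{thmA}, for $n\geq 2$ the right-hand side is at least $1$ for every $0\leq i\leq n$. Hence $\dim H^i(w_{1^n23}(\P^1);\Qb)\geq 1$ for all $0\leq i\leq n$, and fixing $i$ and letting $n\to\infty$ yields $\liminf_{n}\dim H^i(w_{1^n23}(\P^1);\Qb)\geq \lim_{n}\dim H^i(w_{1^n22}(\P^1);\Qb)>0$, the latter limit being $1$ for $i=0$ and $2$ for $i\geq 1$. Thus the stable value is nonzero in every degree, contradicting the vanishing for $i\geq 2$ asserted by Conjecture~G' and proving the corollary. The only point requiring care is the identification of the cover: once the two spaces are described set-theoretically as above, the homeomorphism onto the double cover is immediate, since the distinct points of a divisor vary continuously and the unequal multiplicities $3,2$ supply exactly the canonical ordering that the equal multiplicities $2,2$ withhold; the transfer step is then formal, so I anticipate no substantive obstacle beyond stating this identification precisely.
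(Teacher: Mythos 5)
Your proposal is correct and is essentially the paper's own argument: both identify $w_{1^n23}(\P^1)$ as the free $\mathfrak{S}_2$-quotient cover of $w_{1^n22}(\P^1)$ (the paper phrases this as $w_{1^n23}(\P^1)/\mathfrak{S}_2 \cong w_{1^n22}(\P^1)$ via the fibre bundle over $\pconf_2(\P^1)$), apply the transfer isomorphism $H^*(w_{1^n22}(\P^1);\Qb)\cong \big(H^*(w_{1^n23}(\P^1);\Qb)\big)^{\mathfrak{S}_2}$, and conclude nonvanishing from Theorem~\ref{thmA} since invariants inject into the full cohomology.
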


	\begin{remark}\textup{
		A question along the lines of the conjectures based on the Occam's razor of Hodge structures would be, can one determine the rational cohomology of a variety over $\C$ by counting the number of $\mathbb{F}_q$ points of that variety? The answer, in general, is in negative. In fact, the conjectures were made on the basis of such point-counts. The Grothendieck-Lefschtez trace formula (see \cite{Grothendieck1965}) allows one to count the number of $\mathbb{F}_q$ points of a variety $X$ from its topology, when$X$ is a reasonably nice variety. However, there is no sufficient criterion to cross the bridge from $\F_q$ points of a variety to the rational Betti numbers of the topological space formed by its $\C$-points. This note provides two such examples.}
	\end{remark}
		\subsection*{Acknowledgements}
	I am grateful to my advisor, Benson Farb for his helpful comments and patient guidance. I also deeply thank Melanie Matchett-Wood for her valuable feedback on an earlier draft of the manuscript.

\section{Cohomological stability of some locally closed strata of $\S^m(\P^1_{\C})$}
In this section we prove Theorem \ref{thmA} and Corollary \ref{cor}. One of the important steps is to compute $H^*(\uconf_n\C^{\times};\Qb)$. The latter quantity is well-known (see e.g. \cite{Schiessl2018}, \cite{Drummond-Cole2017} and the references therein); in this paper, we will heavily use the notion of spaces admitting a semi-filtration developed in \cite{Banerjee2019} to give a short alternative method to compute $H^*(\uconf_n\C^{\times};\Qb)$. Our proof of Theorem \ref{thmA} can be outlined via the following steps. \begin{enumerate}
	\item Describe the space $w_{1^n22}(\P^1)$ is a fibre-bundles over $\uconf_2(\Pc^1)$ with fibres isomorphic to $\uconf_n(\Cc)$.
	\item Invoke \cite[Corollary 2]{Banerjee2019} to compute $H^*_c(\uconf_n\C^{\times};\mathbb{Q})$, the compactly supported rational cohomology of $\uconf_n \C^{\times}$.  Then use the Poincaré duality and the Serre spectral sequence for a fibration, in that order, to compute $H^*(w_{1^n22}(\P^1);\Qb)$.
\end{enumerate} 

 \begin{proof}[Proof of Theorem \ref{thmA}]
 	
 	Let us address Step 2 first. We compute $H^*_c(\uconf_n\C^{\times};\mathbb{Q})$ using Corollary 2 from \cite{Banerjee2019}, which we state here for the sake of completeness:
 	
 		Let $X$ be a connected locally compact Hausdorff topological space. Then there exists a spectral sequence: \begin{gather}\label{eq2.1}
 	E_1^{p,q} =\bigoplus_{l+m=q}\,\,\bigoplus_{i+j=p} \big(\Sym^i H_c^{\text{odd}}(X;\mathbb{Q}) \otimes \Lambda^j H_c^{\text{even}}(X;\mathbb{Q})\big)^{(l)} \otimes H^m_c(\Sym^{n-2p}X;\mathbb{Q}) \implies H^{p+q}_c(\uconf_n(X);\mathbb{Q})
 	\end{gather} where $ H_c^{\text{odd}}(X;\mathbb{Q}) := \bigoplus_{k} H_c^{2k+1}(X;\mathbb{Q})$ and $ H_c^{\text{even}}(X;\mathbb{Q}) := \bigoplus_{k} H_c^{2k}(X;\mathbb{Q})$, and $$\big(\Sym^i H_c^{\text{odd}}(X;\mathbb{Q}) \otimes \Lambda^j H_c^{\text{even}}(X;\mathbb{Q})\big)^{(l)} $$ denotes the $l^{(\mathrm{th})}$-graded summand of the cohomology  $\Sym^i H_c^{\text{odd}}(X;\mathbb{Q}) \otimes \Lambda^j H_c^{\text{even}}(X;\mathbb{Q})$. Put $X=\C^{\times}$ in \eqref{eq2.1}.  Then for  $p\geq 1$, the spectral sequence \eqref{eq2.1} gives us: \begin{gather}\label{eq2.2}
E_1^{p,q} = \begin{cases}
\Sym^{p-1} H^1_c(\Cc;\Qb) \otimes H^2_c(\Cc;\Qb) \otimes \Sym^{n-2p}H^2_c(\Cc;\Qb) & q= 2n-3p+1,\\
\Sym^p H^1_c(\Cc;\Qb) \otimes \Sym^{n-2p}H^2_c(\Cc;\Qb) \bigoplus \\\Sym^{p-1}H^1_c(\Cc;\Qb) \otimes H^2_c(\Cc;\Qb) \otimes \Sym^{n-2p-1}H^2_c(\Cc;\Qb)\otimes H^1_c(\Cc;\Qb)& q=2n-3p,\\
\Sym^{n-2p-1}H^2_c(\Cc;\Qb)\otimes H^1_c(\Cc;\Qb) \otimes \Sym^p  H^1_c(\Cc,\Qb) & q=2n-3p-1,\\0 & \text{ otherwise,}
\end{cases}
 	\end{gather}
and for $p=0$ one has  \begin{gather*} 
E_1^{0,q}=\begin{cases}
\Sym^{n} H^2_c(\Cc;\Qb) & q=2n,\\
\Sym^{n-1} H^2_c(\Cc;\Qb)\otimes H^1_c(\Cc;\Qb) & q=2n-1,
\end{cases}
\end{gather*}	with the differentials going horizontally $E_1^{p,q} \to E_1^{p+1,q}$ (see Figure \ref{fig}). Furthermore, one can read off the weights from the explicit description of the terms $E_1^{p,q}$ of the spectral sequence in \eqref{eq2.2} by noting that $H^1_c(\Cc;\Qb)$ is pure of weight $-2$ and Hodge type $(-1,-1)$.  Letting $\Qb(1)$ denote the Tate Hodge structure of weight $-2$ and Hodge type $(-1,-1)$, we obtain:   \begin{gather}H^i(\uconf_n\Cc;\Qb)\cong \begin{cases}
\Qb(i)& i= 0,n,\\
\Qb(i)^2 & 0<i<n.
\end{cases}
\end{gather}
 	\begin{figure}[H]
 		\begin{tikzpicture}          
 		\node (A1) at (1.5, 3.3) {$ $};     \node at (1.3,3) {$2n$}; \node at (1.2,2) {$2n-1$};  \node at (1.2,1) {$2n-2$}; \node at (1.1,0) {$2n-3$}; 
 		\node at (1.1,-1) {$2n-4$};  \node at (1.1,-2) {$2n-5$}; \node at (1.1,-3) {$2n-6$}; \node at (-0.7,-1.8) {$q\uparrow$}; \node at (4.4,-10.7) {$p\rightarrow$}; 
 		\node (A2) at (1.5,-10.1) {$ $};  \node (A3) at (1.4,-10) {$ $};  \node (A4) at (7,-10) {$ $};  \node at (7.5,-10) {$\ldots \ldots$};   
 		\draw [-] (A1)-- (A2);         \draw [-] (A3)-- (A4);                  
 		%column1
 		\node at (2.5,3) {$\mathbb{Q}$};     \node at (2.5,2) {$\mathbb{Q}$};        \node at (2.5,1) {$0$};         \node at (2.5,0) {$0$};   
 		%column2
 		\node at (3.5,3) {$0$};  \node at (3.5,2) {$0$};  \node at (3.5,1) {$\mathbb{Q}$};     \node at (3.5,0) {$\mathbb{Q}^2$};      \node at (3.5,-1) {$\mathbb{Q}$};  \node at (3.5,-4) {$0$}; \node at (3.5,-2) {$0$}; \node at (3.5,-3) {$0$};      
 		%column3
 		\node at (4.5,1) {$0$};  \node at (4.5,0) {$0$};  \node at (4.5,-1) {$0$}; \node at (4.5,-2) {$\mathbb{Q}$};  \node at (4.5,-3) {$\mathbb{Q}^2$};   \node at (4.5,-4) {$\mathbb{Q}$};    \node at (4.5,-5) {$0$};   
 		%column4
 		\node at (5.5,-2) {$0$};   \node at (5.5,-3) {$0$};  \node at (5.5,-4) {$0$};   \node at  (5.5,-5) {$\mathbb{Q}$};   
 		\node at  (5.5,-6) {$\mathbb{Q}$};       \node at  (5.5,-7) {$\mathbb{Q}$};     
 		%base-axis
 		\node at (2.5,-10.2) {$0$};      \node at (3.5,-10.2) {$1$};      \node at (4.5,-10.2) {$2$};      \node at (5.5,-10.2) {$3$};                                               
 		\end{tikzpicture}
\caption{$E_1$ page spectral sequence converging to $H^*_c(\uconf_n(\Cc);\Qb)$.}\label{fig}
 	\end{figure} 

Now we work out Step 1 from the proof outline. For any positive integer $n$ define the map  \begin{gather} \pi_{22}: w_{1^n22}(\P^1)\to \uconf_2(\P^1)\nonumber \\ \Big(\{x_1,\ldots, x_n\}, a,a,b,b\Big)\mapsto \{a,b\}, \label{eq2.3}\end{gather} and note that for all $\{a,b\}\in\uconf_2(\P^1)$, $$\pi_{22}^{-1}\{a,b\} =\uconf_n (\P^1-\{a,b\}) \cong \uconf_n(\Cc).$$ An equivalent description of $w_{1^n22}(\P^1)$ is that it's a quotient of the fibre bundle \begin{gather*}
F_2: \pconf_{n+2}(\P^1) \to \pconf_2(\P^1)\\
(x_1,\ldots, x_{n+2}) \mapsto (x_{n+1},x_{n+2})
\end{gather*}  by the action of $\mathfrak{S}_{n}$ on the fibres of  $F_2$, where \begin{gather*}
F_2^{-1}(x,y) =\{(x_1,\ldots,x_n,x,y):x_i\in \P^1-\{x,y\}, x_i\neq x_j \text{ for } i\neq j \}\\\cong\pconf_n(\Cc),\end{gather*} and the action of $\mathfrak{S}_2$ on the base $\pconf_2(\P^1)$. Let $PB_n:= \pi_1(\pconf_n(\P^1))$ be the \emph{pure Hurwitz braid group} on $n$ strands. Then $PB_2$ acts trivially on the homology of the fibres of $F_2$ because it acts by conjugation on $\pi_1(\uconf_n(\Cc))$.
Therefore, the Hurwitz braid group on two strands $\pi_1(\uconf_2(\P^1))$ also acts by conjugation on $\pi_1(\uconf_n(\Cc))$, so the monodromy is trivial on the homology of the fibres of $\pi_{22}$ in \eqref{eq2.3}. 

On the other hand $H^*_c(\uconf_2(\P^1);\Qb) \cong \Qb$. One way to see this is by using the long-exact sequence of cohomology. Equivalently, plugging $X = \P^1$ and $n=2$ in \eqref{eq2.1} (see \cite[Corollary 2]{Banerjee2019}), we get the following spectral sequence: \begin{gather}\label{eq2.4}
E_1^{p,q} = \begin{cases}
\Qb & (p,q) = (0,0), (0,2), (0,4), (1,0), (1,2)\\
0 &  \text{ otherwise.}
\end{cases}
\end{gather}

\begin{sseqdata}[ name = basic, cohomological Serre grading, classes = {draw=none}]
	\class["\Qb"](0,0)
	\class["\Qb"](0,2)
	\class["\Qb"](0,4)
	\class["\Qb"](1,0)
	\class["\Qb"](1,2)
	\d1(0,0)
	\d1 (0,2)
\end{sseqdata}
\printpage[ name = basic ]

The differentials in \eqref{eq2.4} are induced by the diagonal map \begin{gather}
\Delta: \P^1 \to \Sym^2(\P^1)\nonumber\\ x\mapsto \{x,x\}
\end{gather} To understand the differentials we need to compute the induced map on cohomology $$\Delta^*: H^*(\Sym^2(\P^1);\Qb) \to H^*(\P^1;\Qb).$$
To this end, we think of $\Sym^2(\P^1)$ as the space of degree $2$ divisors in $\P^1$; so we have an isomorphism $$F: \mathbb{P}(\Gamma(\P^1,\mathcal{O}_{\P^1}(2))^{\smvee}) \xrightarrow{\cong}  \Sym^2(\P^1) $$ given by a global section in $\mathcal{O}_{\P^1}(2)$ mapping to its divisor (for a vector space $V$, we denote its dual by $V^{\smvee}$).  Note that $\mathbb{P}(\Gamma(\P^1,\mathcal{O}_{\P^1}(2))^{\smvee})\cong \P^2$. In terms of coordinates one can write down $F^{-1}$ as:\begin{gather}
F^{-1}:\Sym^2 (\P^1) \xrightarrow{\cong} \P^2\nonumber\\ \{[a_1:b_1], [a_2,b_2]\} \mapsto [a_1a_2: -(a_1b_2+a_2b_1): b_1b_2]
\end{gather}
Now $$F^{-1}_{\circ} \Delta: \P^1 \to \P^2$$ embeds $\P^1$ as a smooth conic in $\P^2$; it is the discriminant locus cut out by $y^2-xz=0$. And observe that $$(F^{-1}_{\circ} \Delta)^*: H^i(\P^2;\Qb) \to H^i(\P^1;\Qb)$$ is an isomorphism for $i=0,2$. Indeed, the fundamental class of $\P^2$ restricts to that of $\P^1$ for $i=0$, and for $i=2$ the hyperplane class in $H^2(\P^2;\mathbb{Z}) $ restricts to twice the class of a point in $\P^1$ by Bézout's theorem, thereby inducing an isomorphism on cohomology with $\Qb$ coefficients in degree $2$. Combining this information with the spectral sequence in \eqref{eq2.4} we get that \begin{gather}\label{eq2.7}
H^i(\uconf_2(\P^1);\Qb) \cong \begin{cases}
\Qb & i=4\\
0 & \text{ otherwise.}
\end{cases}
\end{gather}

The Serre spectral sequence for the fibration \eqref{eq2.3}, combined with \eqref{eq2.2}, \eqref{eq2.7}, and the fact that $\pi_1(\uconf_2(\P^1))$ acts trivially on $H_*(\uconf_n(\Cc);\Qb)$, gives us \begin{gather}E_2^{p,q} = \begin{dcases}
H^0(\uconf_2(\P^1);\Qb)\otimes H^q(\uconf_n(\Cc);\Qb) &p=0\\
0 & p\geq 0
\end{dcases}\implies H^*(w_{1^n22}(\P^1);\Qb) .\end{gather}
Therefore for all $i\geq 0$ we have $$H^i(w_{1^n22}(\P^1);\Qb) \cong H^i(\uconf_n(\Cc);\Qb)$$ proving Theorem \ref{thmA}.\end{proof}

\begin{proof}[Proof of Corollary \ref{cor}]
	The space $w_{1^n23}(\P^1)$ is a fibre bundle: 	\begin{gather} \pi_{23}: w_{1^n23}(\P^1)\to \pconf_2(\P^1)\nonumber\\ \{x_1,\ldots, x_n\}, a,a,b,b,b\mapsto (a,b) \label{eq2.10} \end{gather} with fibres $$\pi_{23}^{-1}(a,b) =\uconf_n (\P^1-\{a,b\})\cong \uconf_n(\Cc).$$ An equivalent description of $w_{1^n23}(\P^1)$ is that it's a quotient of the fibre bundle \begin{gather*}
	F_2: \pconf_{n+2}(\P^1) \to \pconf_2(\P^1)\\
	(x_1,\ldots, x_{n+2}) \mapsto (x_{n+1},x_{n+2})
	\end{gather*}  by the action of $\mathfrak{S}_{n}$ on the fibres of  $F_2$, where \begin{gather*}
	F_2^{-1}(x,y) =\{(x_1,\ldots,x_n,x,y):x_i\in \P^1-\{x,y\}, x_i\neq x_j \text{ for } i\neq j \}\\\cong\pconf_n(\Cc).\end{gather*} Therefore, $w_{1^n23}(\P^1) /\mathfrak{S}_2 \cong w_{1^n22}(\P^1)$, and in turn, \begin{gather}H^*(w_{1^n22}(\P^1);\Qb) \cong \Big(H^*(w_{1^n23}(\P^1);\Qb)\Big)^{\mathfrak{S}_2} \end{gather} By Theorem \ref{thmA}, for each $0\leq i \leq n$ the dimension of the $\Qb$-vector space $ H^i(w_{1^n22}(\P^1);\Qb)$, and in turn $\Big(H^i(w_{1^n22}(\P^1);\Qb)\Big)^{\mathfrak{S}_2} $ is nonzero, therefore $\dim H^i(w_{1^n23}(\P^1);\Qb) \neq 0$ for $0\leq i\leq n$, thus proving the corollary.
\end{proof} 
\begin{remark}
	Note that using the Serre spectral sequence for the fibre bundle in \eqref{eq2.10} one obtains \begin{gather*}
E_2^{p,q} \cong \begin{cases}
\Qb & \{(p,q): p=0,2, \,\, q=0,n\}\\
\Qb^2 & \{(p,q): p=0,2, \,\, 0<q<n\}\\ 0 & \text{ otherwise.}
\end{cases}\end{gather*} as shown in the following figure:

\begin{sseqpage}[cohomological Serre grading, classes = {draw=none}]
	\class["\Qb"](0,0)
	\class["\Qb^2"](0,1)
	\class["\Qb^2"](0,2)
		\class["\Qb^2"](0,3)
	\class["\vdots"](0,4)
	\class["\Qb"](2,0)
	\class["\Qb^2"](2,1)
	\class["\Qb^2"](2,2)
		\class["\Qb^2"](2,3)
			\class["\vdots"](2,4)
	\d2 (0,1)
	\d2 (0,2)
\d2 (0,3)
\end{sseqpage}

If the differentials $d_2^{p,q}$ were isomorphisms (respectively, surjection) of $\Qb$-vector spaces for $p=0$ and $q\geq 2$ (respectively, $q=1$), we would have had $$H^i(w_{1^n23}(\P^1);\Qb)=\begin{cases}
\Qb & i=0,1\\ 0 & \text{ otherwise,}
\end{cases}$$ and this is exactly the statement of Conjecture H' of Vakil-Wood.
However, the proof of Corollary \ref{cor} imply that the differentials on the $E_2$-page for the fibre bundle $$\pi_{23}: w_{1^n23}(\P^1)\to \pconf_2(\P^1)$$  cannot, in fact, be isomorphisms. 
\end{remark}

\bibliographystyle{alpha}
\bibliography{VakilWoodConj}
\end{document}